\numberwithin{equation}{section}
\newtheorem{theorem}[equation]{Theorem}
\newtheorem{corollary}[equation]{Corollary}
\newtheorem{prop}[equation]{Proposition}
\newtheorem{lemma}[equation]{Lemma}
\theoremstyle{definition}
\newtheorem{assumption}[equation]{Assumption}
\newtheorem{remark}[equation]{Remark}
\newtheorem{defn}[equation]{Definition}
\newtheorem{example}[equation]{Example}
\newtheorem{question}[equation]{Question}
\newcommand{\ol}[1]{\overline{#1}}
\newcommand{\mc}[1]{\mathcal{#1}}
\newcommand{\mf}[1]{\mathfrak{#1}}
\newcommand{\Q}{\mathbb Q}
\newcommand{\Z}{\mathbb Z}
\renewcommand{\P}{\mathbb P}
\newcommand{\proj}{\mathbb P}
\renewcommand{\phi}{\varphi}
\newcommand{\Spec}{\mathrm{Spec} \ }
\DeclareMathOperator{\Gal}{Gal}
\DeclareMathOperator{\GL}{GL}
\DeclareMathOperator{\divi}{div}
\DeclareMathOperator{\disc}{disc}
\DeclareMathOperator{\sep}{sep}
\DeclareMathOperator{\odd}{odd}
\DeclareMathOperator{\even}{even}
\DeclareMathOperator{\cha}{char}
\DeclareMathOperator{\Art}{Art}
\DeclareMathOperator{\Aut}{Aut}
\DeclareMathOperator{\Hom}{Hom}
\DeclareMathOperator{\db}{db}
\DeclareMathOperator{\length}{length}
\title[Conductor-discriminant inequality]{Conductor-discriminant inequality for hyperelliptic curves in odd residue characteristic}
\author{Andrew Obus}
\address{Baruch College}
\curraddr{1 Bernard Baruch Way. New York, NY 10010, United States of America.}
\email{andrewobus@gmail.com}
\author{Padmavathi Srinivasan}
\address{ICERM, Brown University}
\curraddr{121 South Main Street, 11th Floor, Providence, RI 02903, United States of America.}
\email{padmavathi\_srinivasan@brown.edu}
\subjclass[2010]{Primary: 11G20, 14H25, 14J17; Secondary: 14B05}
\keywords{Artin conductor, minimal discriminant, hyperelliptic curve}
\date{\today}
\begin{document}

\maketitle

\begin{abstract}
We prove an inequality between the conductor and the discriminant for all hyperelliptic
curves defined over discretely valued
fields $K$ with perfect residue field of characteristic not $2$.  Specifically,
if such a curve is given by $y^2 = f(x)$ with $f(x) \in \mc{O}_K[x]$,
and if $\mc{X}$ is its minimal regular model over
$\mc{O}_K$, then the negative of the Artin conductor of $\mc{X}$ (and
thus also the number of irreducible components of the special fiber of
$\mc{X}$) is bounded
above by the valuation of $\disc(f)$.  There are no
restrictions on genus of the curve or on the ramification of the splitting field of
$f$.  This generalizes earlier work of Ogg, Saito, Liu, and the second author.  
\end{abstract}

\section{Introduction}
In this note, we prove a conductor-discriminant inequality 
for all hyperelliptic curves over discretely valued fields with perfect residue field of characteristic not $2$.

\subsection{Main theorem}\label{Smain}
Let $K$ be a discretely valued field with perfect residue field $k$ of characteristic not $2$. Let $\mc{O}_K$ be the ring of integers of $K$. Let $\nu_K \colon K \rightarrow \Z \cup \{ \infty
\}$ be the corresponding discrete valuation. Let $X$ be a smooth,
projective, geometrically integral curve of genus $g \geq 1$ defined
over $K$.  Let $\mc{X}$ be a proper,
flat, regular $\mc{O}_K$-scheme with generic fiber $X$. The \emph{Artin conductor} associated to the model $\mc{X}$ is defined by
\[ \Art (\mc{X}/\mc{O}_K) =  \chi(\mc{X}_{\overline{K}}) - \chi(\mc{X}_{\overline{k}}) - \delta ,\]
where $\chi$ is the Euler characteristic for the $\ell$-adic cohomology and $\delta$ is the Swan conductor associated to the $\ell$-adic representation $\Gal (\overline{K}/K) \rightarrow \Aut_{\Q_\ell}
(H^1_{\mathrm{et}}(\mc{X}_{\overline{K}}, \Q_\ell))$ ($\ell \neq \cha k$). The Artin conductor is a measure of degeneracy of the model $\mc{X}$; it is a non-positive integer that is zero precisely when
$\mc{X}/\mc{O}_K$ is smooth or when $g=1$ and $(\mc{X}_k)_{\mathrm{red}}$ is smooth. If $\mc{X}/\mc{O}_K$ is a regular, semistable model, then $-\Art(\mc{X}/\mc{O}_K)$ equals the number of singular points of the special fiber $\mc{X}_k$. 

For hyperelliptic curves, there is another measure of degeneracy
defined in terms of minimal Weierstrass equations. Assume that $X$ is
hyperelliptic, with hyperelliptic degree $2$ morphism $X \to Y \cong \proj^1_K$. An integral Weierstrass equation for $X$ is an
equation of the form $y^2= f(x)$ with $f(x) \in \mc{O}_K[x]$, such that $X$ is birational to the plane curve given by this equation. The discriminant of such an equation is defined to be the
non-negative integer $\nu_K(\disc'(f))$, where $\disc'(f)$ is the discriminant of $f$, thought of as a polynomial of degree $2 \lceil \deg(f)/2 \rceil$ (note that this is the usual discriminant $\disc(f)$
whenever $f$ is monic or $\deg(f)$ is even). The main theorem of the paper is the folllowing.
\begin{theorem}\label{Preduce} Let $K$ be the fraction field of a Henselian discrete valuation ring with algebraically closed residue field of characteristic not $2$ and let $f \in \mc{O}_K[x]$ be a separable polynomial with $\deg(f) \geq 3$. Let $X$ be the hyperelliptic curve with affine equation $y^2=f(x)$. Then there exists a proper flat regular $\mc{O}_K$-model $\mc{X}_f$ of $X$ such that 
\begin{equation}\label{Econddiscineq} -\Art(\mc{X}_f/\mc{O}_K) \leq \nu_K(\disc'(f)). \end{equation}
\end{theorem}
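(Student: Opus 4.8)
The plan is to construct a regular model $\mc{X}_f$ of $X$ by resolving a natural singular model with the help of Mac Lane valuations, and then to estimate both sides of \eqref{Econddiscineq} against one common combinatorial object — a finite tree $\mc{V}$ of inductive (Mac Lane) valuations on $K[x]$ — so that the desired inequality can be verified one contribution at a time. After a preliminary change of variables, and using the degree-$2\lceil\deg f/2\rceil$ convention built into $\disc'$ to absorb the leading coefficient and the point at infinity, I would first choose a regular model $\mc{Y}$ of $\P^1_K$ whose special fiber is a tree of $\P^1$'s indexed by $\mc{V}$, arranged so that the roots of $f$ extend to disjoint sections meeting $\mc{Y}_k$ transversally at smooth points of pairwise distinct components. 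Let $\mc{Z}$ be the normalization of $\mc{Y}$ in $K(x)[y]/(y^2-f(x))$. Since $\cha(k)\neq 2$, the double cover $\mc{Z}\to\mc{Y}$ is tame away from the crossing points of $\mc{Y}_k$, so every singularity of $\mc{Z}$ lies over such a crossing and is a (weak wild) cyclic quotient singularity; the Obus--Wewers procedure resolves these explicitly by inserting chains of components again indexed by Mac Lane valuations, and the output is the desired regular model $\mc{X}_f$. We do not need $\mc{X}_f$ to be minimal.

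To handle the left-hand side I would use $-\Art(\mc{X}_f/\mc{O}_K)=\chi\bigl((\mc{X}_{f,\ol{k}})_{\mathrm{red}}\bigr)-(2-2g)+\delta$. The ``geometric'' term $\chi\bigl((\mc{X}_{f,\ol{k}})_{\mathrm{red}}\bigr)-(2-2g)$ is a sum of local data — one contribution per component, recording its genus, with corrections at the singular points of $\mc{X}_{f,\ol{k}}$ — which can be read off from the tree $\mc{V}$ together with the resolution chains attached to it. Organizing this as a recursion on $\mc{V}$ (passing to a subtree rooted at a vertex corresponds, in the simplest case, to a substitution $x\mapsto x_0+\pi^{\lambda}x'$ that zooms in on a cluster of roots) packages it as a sum of vertex contributions $A_v$. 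The Swan term $\delta$ is treated separately: it is the Swan conductor of $H^1(X_{\ol{K}},\Q_\ell)$ as a $\Gal(\ol{K}/K)$-module, and since $X\to\P^1$ is a tame cover its ramification is governed entirely by the action of $\Gal(\ol{K}/K)$ on the branch locus (the roots of $f$, and $\infty$); hence $\delta\le\sum_i d(K(\alpha_i)/K)$, the sum over the irreducible factors $f_i$ of $f$ of the discriminant exponents of the fields $K(\alpha_i)=K[x]/(f_i)$.

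For the right-hand side, factor $f=\mathrm{lc}(f)\prod_i f_i$; then, up to non-negative leading-coefficient and point-at-infinity corrections, $\nu_K(\disc'(f))=\sum_i\nu_K(\disc(f_i))+\sum_{i\neq j}\nu_K(\Res(f_i,f_j))$, with $\nu_K(\disc(f_i))=d(K(\alpha_i)/K)+2\,\db(\alpha_i)$ and the resultant terms non-negative. Combining with the previous paragraph, \eqref{Econddiscineq} reduces to two assertions: that $\delta\le\sum_i d(K(\alpha_i)/K)$ (established above), and that $\sum_v A_v$ is at most $\sum_i 2\,\db(\alpha_i)$ plus the resultant terms. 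For the latter I would match things vertex by vertex along $\mc{V}$: at a vertex where the double cover is essentially semistable, $A_v$ is dominated by the corresponding root-distance/resultant terms by an elementary count, whereas a vertex $v$ arising from a root $\alpha$ of $f$ that admits a close approximation $\beta\in\ol{K}$ of strictly smaller degree over $K$ produces, through its resolution chains, a larger $A_v$ — and there the lower bound on $\db(\alpha)$ in terms of $\deg(\beta)$ and $\nu(\alpha-\beta)$ proved earlier in the paper is exactly what is needed to cover it.

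The hard part will be this last comparison, and especially the discriminant bonus estimate underlying it: bounding $\db(\alpha)=\mathrm{length}_{\mc{O}_K}\bigl(\mc{O}_{K(\alpha)}/\mc{O}_K[\alpha]\bigr)$ from below when $\alpha$ is $\pi$-adically close to a lower-degree element is delicate precisely because $K(\alpha)/K$ can be wildly ramified, so the usual tame discriminant formulas are unavailable and one must argue directly with Mac Lane valuations adapted to $\alpha$. A further, genuinely nontrivial, difficulty is the bookkeeping: the scheme works only if the $\Art$ computation and the $\disc'$ computation are carried out along the very same tree $\mc{V}$ and the same resolution chains, which forces care with the degree-parity convention in $\disc'$, with the behavior at $\infty$, and with how the valuation of each root-difference is apportioned among the vertices of $\mc{V}$.
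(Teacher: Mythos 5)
Your plan parallels the paper's at the level of strategy (regular model of $\P^1_K$ indexed by Mac Lane valuations, double cover, compare both sides combinatorially), but there is a concrete gap in the bookkeeping that makes the inequality you propose to prove false as stated.

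The problem is your bound $\delta\le\sum_i\Delta_{K_i/K}$ for the Swan conductor. What one actually has is the \emph{exact} formula
\[
\delta \;=\; \sum_{i=1}^r\bigl(\Delta_{K_i/K}-\deg f_i+1\bigr)\;=\;\sum_i\Delta_{K_i/K}-\bigl(\deg f-r\bigr),
\]
i.e.\ the contribution of each factor $f_i$ is its \emph{wild} discriminant exponent $\Delta_{K_i/K}-(\deg f_i-1)$, not $\Delta_{K_i/K}$. Dropping the term $-(\deg f - r)$ is fatal, because that same quantity reappears with the opposite sign in the $\chi$-term: for the model $\mc{X}_f$ one computes $\chi\bigl((\mc{X}_{f,\ol k})_{\mathrm{red}}\bigr)-(2-2g)=2(N_{\mc{Y},\mathrm{even}}-1)+(\deg f-r)$, so $\sum_v A_v=2(N_{\mc{Y},\mathrm{even}}-1)+(\deg f-r)$, whatever the vertex decomposition. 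Consequently the assertion you reduce to — ``$\sum_v A_v$ is at most $\sum_i 2\,\db(\alpha_i)$ plus the resultant terms'' — is $2(N_{\mc{Y},\mathrm{even}}-1)+(\deg f-r)\le\db_K(f)$, which is strictly stronger than the inequality the paper proves, namely $2(N_{\mc{Y},\mathrm{even}}-1)\le\db_K(f)$, and fails whenever the latter is near equality. For example, for $f$ the degree-$6$ minimal polynomial of $\pi_K^{1/3}+\pi_K^{1/2}$ with $\cha k\ne 3$ one has $r=1$, $\db_K(f)=6$, and $2(N_{\mc{Y},\mathrm{even}}-1)=6$, so the conductor--discriminant inequality is an equality; your version would demand $6+5\le 6$.

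The fix is to use the exact Swan formula (the paper's Proposition \ref{Pswanconductor}, borrowed from \cite{DDMM}) rather than the crude upper bound. Once you do, the entire $\sum_i\Delta_{K_i/K}$ and the $\deg f-r$ cancel exactly against $\nu_K(\disc'(f))$, and what remains to prove is precisely the reformulation of Remark~\ref{Rreduction}: $\db_K(f)\ge 2(N_{\mc{Y},\mathrm{even}}-1)$. After that, your plan — bounding $N_{\mc{Y},\mathrm{even}}$ from above by component counts along the Mac Lane tree, and $\db_K(f)$ from below via the closest-lower-degree-approximation estimate — is essentially the paper's route through \S\S\ref{Scounting}--\ref{Sreducibleproof}. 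One smaller caveat: when $\cha k\nmid\deg f_i$ your wording about roots extending to ``disjoint sections'' should be replaced by ``the horizontal part of $\divi_0(f_i)$ is a regular divisor'' (it is a multisection of degree $\deg f_i$, not a section); and if one arranges the \emph{odd} components of $\divi_0(f)$ to be regular and pairwise disjoint on $\mc{Y}$, the normalization in $K(X)$ is already regular — no post-hoc resolution of quotient singularities on $\mc{Z}$ is needed.
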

We call (\ref{Econddiscineq}) the \textit{conductor-discriminant inequality for $f$}.

A \emph{minimal Weierstrass equation} is an equation for which the integer $\nu_K(\disc'(f))$ is as small as possible amongst all integral equations. We define
the \emph{minimal discriminant} $\Delta_{X/K}$ of $X$ to be
$\nu_K(\disc'(f))$ for the minimal Weierstrass equation. The minimal
discriminant of $X$ is zero precisely when the minimal proper regular
model of $X$ is smooth over $S$. Let $\Art (X/K)$ denote the Artin conductor associated to the
\emph{minimal} proper regular model of $X$ over $\mc{O}_K$.  

When $g=1$, we have $-\Art (X/K) = \Delta_{X/K}$ by the Ogg-Saito
formula \cite[p.\ 156, Corollary 2]{saito2}. When $g=2$,
Liu~\cite[p.\ 52,
Th\'eor\`eme 1 and p.\ 53, Th\'eor\`eme 2]{Li:cd} shows that $-\Art
(X/K) \leq \Delta_{X/K}$; he also shows that equality can fail to
hold. In the second author's thesis \cite{PadmaRational}, Liu's
inequality was extended to hyperelliptic curves of arbitrary genus
assuming that the roots of $f$ are defined over an \emph{unramified}
extension of $K$. In subsequent work \cite{PadmaTame}, the second
author proved the same inequality assuming only that roots of $f$ are
defined over a \emph{tame} extension of $K$. The argument in \cite{PadmaTame} is an induction on a natural combinatorial gadget attached to a polynomial called the metric tree that records the $p$-adic distances between the roots of the polynomial. 

As a corollary to Theorem~\ref{Preduce}, we prove this inequality for \emph{all} cases away from residue characteristic $2$. 
\begin{corollary}\label{Tfinalthm}
 Let $X$ be a hyperelliptic curve of genus $g \geq 1$ over a discretely valued field $K$ with perfect residue field of characteristic not
 equal to $2$. Let $\Delta_{X/K}$ be the minimal discriminant of $X$
 and let $\Art(X/K)$ denote the Artin conductor of the minimal regular
 model of $X$. Then
 $-\Art(X/K) \leq \Delta_{X/K}$.
\end{corollary}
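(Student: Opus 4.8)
The plan is to deduce Corollary~\ref{Tfinalthm} from Theorem~\ref{Preduce} by a sequence of harmless base changes, together with the standard fact that, among all proper flat regular models of a curve of genus at least $1$, the minimal one has the smallest value of $-\Art$.

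First I would reduce to the hypotheses of Theorem~\ref{Preduce}. We may assume $\mc{O}_K$ is complete, replacing $K$ by its completion if necessary: a minimal Weierstrass equation over $\mc{O}_K$ stays integral with the same discriminant valuation, and the Euler characteristics and Swan conductor defining $\Art$ are determined by the geometric fibers and by the restriction to inertia of the action on $H^1_{\mathrm{et}}$, none of which is affected by completion, so neither $\Delta_{X/K}$ nor $\Art(X/K)$ changes; moreover a complete discrete valuation ring is Henselian and excellent, so resolution of singularities and minimal regular models are available. Now let $K'$ be the completion of the maximal unramified extension of $K$, so that $\mc{O}_{K'}$ is a complete discrete valuation ring with algebraically closed residue field of characteristic $\neq 2$. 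Since $K'/K$ is unramified, the formation of the minimal regular model commutes with the base change $\mc{O}_K \to \mc{O}_{K'}$ --- the one point to check is that no component of the special fiber becomes contractible after the extension, which is true because a form of $\mathbb{P}^1$ over a field that is not already $\mathbb{P}^1$ is an anisotropic conic, and such a conic has even self-intersection whenever it occurs in a special fiber, hence is never a $(-1)$-curve --- while the three terms defining $\Art$ are unchanged for the same reasons as above; thus $\Art(X/K) = \Art(X_{K'}/K')$. Finally, a minimal Weierstrass equation $y^2 = f(x)$ over $\mc{O}_K$ is still an integral Weierstrass equation over $\mc{O}_{K'}$, with $\nu_{K'}(\disc'(f)) = \nu_K(\disc'(f))$, so $\Delta_{X_{K'}/K'} \leq \Delta_{X/K}$. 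It therefore suffices to prove $-\Art(X_{K'}/K') \leq \Delta_{X_{K'}/K'}$.

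Next I would apply Theorem~\ref{Preduce} over $K'$. Fix a minimal Weierstrass equation $y^2 = \tilde{f}(x)$ for $X_{K'}$ over $\mc{O}_{K'}$; since $X_{K'}$ has genus $g \geq 1$, the polynomial $\tilde{f}$ is separable of degree $2g+1$ or $2g+2$, in particular of degree at least $3$, so Theorem~\ref{Preduce} produces a proper flat regular $\mc{O}_{K'}$-model $\mc{X}_{\tilde{f}}$ of $X_{K'}$ with $-\Art(\mc{X}_{\tilde{f}}/\mc{O}_{K'}) \leq \nu_{K'}(\disc'(\tilde{f})) = \Delta_{X_{K'}/K'}$. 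It remains to compare $\mc{X}_{\tilde{f}}$ with the minimal regular model $\mc{X}_{\min}$ of $X_{K'}$. Because $g \geq 1$, the identity on $X_{K'}$ extends to a unique $\mc{O}_{K'}$-morphism $\mc{X}_{\tilde{f}} \to \mc{X}_{\min}$, which, being a birational morphism of regular arithmetic surfaces, factors as a finite composition of blow-ups at closed points, each lying in the special fiber. Each such blow-up replaces the blown-up point by an exceptional copy of $\mathbb{P}^1_{\overline{k}}$ and thereby raises $\chi(\mc{X}_{\overline{k}})$ by exactly $1$, leaving the other two terms of $\Art$ unchanged, so $-\Art$ is nondecreasing along $\mc{X}_{\tilde{f}} \to \mc{X}_{\min}$. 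Chaining all of this,
\[ -\Art(X/K) = -\Art(X_{K'}/K') = -\Art(\mc{X}_{\min}/\mc{O}_{K'}) \leq -\Art(\mc{X}_{\tilde{f}}/\mc{O}_{K'}) \leq \Delta_{X_{K'}/K'} \leq \Delta_{X/K}. \]

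The whole substance of the corollary is Theorem~\ref{Preduce}; the remainder is bookkeeping, and I do not expect a genuine obstacle. The points needing (routine) care are that $\Art$ depends on the model only through $\chi(\mc{X}_{\overline{k}})$, which is exactly what makes $-\Art$ monotone under blow-up and therefore minimized at the minimal regular model, and that both $\Delta$ and $\Art$ survive the completion and the passage to the maximal unramified extension. The hypothesis $g \geq 1$ is used twice: to guarantee the existence of a unique minimal regular model, and to force $\deg \tilde{f} \geq 3$ so that Theorem~\ref{Preduce} applies.
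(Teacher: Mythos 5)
Your proposal is correct and follows essentially the same route as the paper's proof: reduce by unramified base change to the strictly Henselian (algebraically closed residue field) case, invoke Theorem~\ref{Preduce} for a minimal Weierstrass polynomial, and then observe that the minimal regular model minimizes $-\Art$ among proper regular models. The only notable difference in justification is for that last step: the paper cites Liu's formula $-\Art(\mc{X}/\mc{O}_K) = n - 1 + \phi$, where $n$ is the number of components of the special fiber and $\phi$ is the conductor exponent (equation~(\ref{Partinconductor})), and observes that $n$ is minimized on the minimal model; you instead factor the domination map $\mc{X}_{\tilde f} \to \mc{X}_{\min}$ into point blow-ups and directly track that each blow-up raises $\chi(\mc{X}_{\overline{k}})$ by one while leaving the other two terms of $\Art$ unchanged. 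Both arguments express the same underlying monotonicity, and you could in fact shorten your write-up by simply citing (\ref{Partinconductor}) as the paper does. Your extra care about why the minimal regular model commutes with unramified base change (the anisotropic-conic remark) is a reasonable sketch of a standard fact, which the paper compresses into the phrase ``regular models satisfy \'etale descent.''
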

\begin{proof}
We may assume that $K$ is Henselian, since the invariants in (\ref{Econddiscineq}) are constant under unramified base change and regular models satisfy \'{e}tale descent.  

Let $\mc{X}/\mc{O}_K$ be a regular model of $X$. Let $n$ be the number of irreducible components of the geometric special fiber $\mc{X}_{\ol{k}}$ and let 
$\phi$ be the conductor exponent for the Galois representation $\Gal
(\overline{K}/K) \rightarrow \Aut_{\Q_\ell}
(H^1_{\mathrm{et}}(X_{\overline{K}}, \Q_\ell))$ ($\ell \neq \cha k$),
which only depends on $X$. Then \cite[Proposition~1]{Li:cd} shows that
\begin{equation}\label{Partinconductor}
 -\Art(\mc{X}/\mc{O}_K) = n - 1 + \phi.
\end{equation}

If $\mc{X}$ is a proper regular
model of $X$, then the number of irreducible
components of $\mc{X}_{\ol{k}}$ is at least the number of irreducible
components in the geometric special fiber of the minimal regular model
of the curve $X$.  Thus (\ref{Partinconductor}) implies $-\Art(X/K)
\leq -\Art(\mc{X}/\mc{O}_K)$.
The minimal discriminant of a hyperelliptic curve $X$ is equal
to the discriminant of one of the integral polynomials $f$ that
defines it via an equation $y^2 = f(x)$.  So if $f$ is such a
polynomial, we have 
\begin{equation}\label{E:eqstring}-\Art(X/K) \leq \-\Art(\mc{X}_f/\mc{O}_K) \leq
\nu_K(\disc'(f)) = \Delta_{X/K},\end{equation} where the second inequality is
Theorem~\ref{Preduce}.  This proves the corollary.
\end{proof}

\begin{remark}
The proof of Corollary~\ref{Tfinalthm} in this paper in fact gives a new proof
of the results in \cite{PadmaRational} and \cite{PadmaTame}. 
\end{remark}

\begin{prop}\label{P:genus1}
 Keep the notation of Theorem~\ref{Preduce}. Suppose $\deg(f)=3$. Then 
 \[ -\Art(\mc{X}_f/\mc{O}_K) = \Delta_{X/K}.\]
\end{prop}

\begin{remark}\label{R:genus1close}
 Let $\mc{X}^{\mathrm{min}}$ be the minimal proper regular model of an elliptic curve $X$. The Ogg-Saito formula is the assertion that $-\Art(\mc{X}^{\mathrm{min}}) = \Delta_{X/K}$. By Proposition~\ref{P:genus1}, away from residue characteristic $2$, the Ogg-Saito formula is equivalent to the assertion that the canonical map $\mc{X}_f \rightarrow \mc{X}^{\mathrm{min}}$ is an isomorphism. 
 \end{remark}

\subsection{Related work of other authors}\label{Searlier}
\subsubsection{Small genus}\label{Ssmallgenus}
 In genus $1$, the proof of the Ogg-Saito formula used the explicit
 classification of special fibers of minimal regular models of genus
  $1$ curves. In genus $2$, \cite{Li:cd} defines another discriminant that is
  specific to genus $2$ curves, and compares both the Artin conductor
  and the minimal discriminant (our $\Delta_{X/K}$, which Liu calls $\Delta_0$) to
  this third discriminant (which Liu calls $\Delta_{\min}$). This
  third discriminant $\Delta_{\min}$ is sandwiched between the Artin
conductor and the minimal discriminant and is defined using a possibly
non-integral Weierstrass equation such that the associated
differentials generate the $\mc{O}_K$-lattice of global sections of
the relative dualizing sheaf of the minimal regular model.  It does
not directly generalize to higher genus hyperelliptic curves (but see
\cite[Definition~1, Remarque~9]{Li:cd} for a related
conductor-discriminant question).
Liu even provides an explicit formula for the difference between the
Artin conductor and both $\Delta_0$ and $\Delta_{\min}$ that can be described in terms of the
combinatorics of the special fiber of the minimal regular model (of
which there are already over $120$ types!).   This leads one to ask
the following question, which we do not address in this paper.

\begin{question}
Can one give an interpretation of the difference between $-\Art(X/K)$ and $\Delta_{X/K}$ in Corollary~\ref{Tfinalthm}, analogous to the interpretation given in \cite{Li:cd}?
\end{question}

\subsubsection{General curves}\label{Srelated}

Several people have worked on comparing conductor exponents and discriminants. In the semistable case, work of Kausz \cite{Kau} (when $p \neq 2$) and Maugeais \cite{Mau} (all $p$) compares
the Artin conductor to yet another notion of discriminant. In \cite{DDMM}, the authors compute many arithmetic invariants attached to hyperelliptic curves in the semistable case in terms of the cluster
picture of the polynomial $f$ (which encodes the same information as
the metric tree of the roots of $f$.) In \cite{Kohls}, Kohls compares the conductor exponent $\phi$ with the minimal discriminant of superelliptic
curves, by studying the Galois action on the special fiber of the semistable model as in \cite{BW_Glasgow}. In \cite{BKSW}, the authors define minimal discriminants of Picard curves (degree $3$
cyclic covers of $\P^1_K$) and compare the conductor exponent and the minimal discriminant for such curves.

\subsection{Summary of proof strategy}\label{Sideas}
Assume for the rest of the introduction that $\deg(f)$ is even, so
$\disc'(f) = \disc(f)$.  The common technique of \cite{PadmaRational},
\cite{PadmaTame}, and this paper is to build a regular
model $\mc{X}_f$ of $X$ by normalizing a 
specific regular model $\mc{Y}_f$ of $Y \cong \mathbb{P}^1_K$ in $K(X)$.  The model $\mc{Y}_f$ is
an embedded resolution of $(\proj^1_{\mc{O}_K}, B)$, where $B$ is the
branch locus of the normalization of the standard model
$\proj^1_{\mc{O}_K}$ in $K(X)$.  That is, $\mc{Y}_f$ is a blowup of
$\P^1_{\mc{O}_K}$ on which all components of $\divi(f)$ of odd
multiplicity are regular and disjoint. 

In \S\ref{Sexcess}, we reduce the proof of the
conductor-discriminant inequality to an inequality between the number
of components of the model $\mc{Y}_f$ and the ``discriminant
bonus''

\begin{equation}\label{Efirstdb}
  \db_K(f) \colonequals \nu_K(\disc(f)) - \sum_{i=1}^r \nu_K(\disc(K_i/K)),
\end{equation}
where $f = f_1 \cdots f_r$ is an irreducible factorization in $K[x]$ and $K_i$ is the field generated by a root of $f_i$. Namely, Remark~\ref{Rreduction} says that $-\Art(\mc{X}_f/\mc{O}_K) \leq \nu_K(\disc(f))$ if and only if
\begin{equation}\label{Efirstreformulation}
  2(N_{\mc{Y}_f,\even} - 1) \leq \db_K(f),
\end{equation}
where $N_{\mc{Y}_f,  \even}$ is the number of irreducible components of the special fiber
of $\mc{Y}_f$ on which the order of $f$ is even
(see Proposition~\ref{Partincalculation}).

The main content of \S\ref{Sinduction}, where Theorem~\ref{Preduce}
and Proposition~\ref{P:genus1} are proved, is an inductive argument 
that shows that we can build $\mc{Y}_f$ by blowing up successive points on
models $\mc{Y}$ of $\proj^1_K$ where the branch locus of $\mc{Y}$ in $K(X)$ is singular, and that the
inequality (\ref{Efirstreformulation}) is satisfied at the end of this
process. We heartily thank the referee of a previous submission for
the core of this argument.  

\begin{example}\label{Ebasic}
Consider the hyperelliptic curve $X$ given by the affine
equation $y^2 = f(x)$, where $f(x) = x^d - \pi_K$ and $\pi_K$ is a
uniformizer of $K$.
In this case, the normalization $\mc{X}$ of $\proj^1_{\mc{O}_K}$ (with
coordinate $x$) in the
function field $K(X)$ is already regular.

Assume $d$ is even for simplicity.  Then $\chi(\mc{X}_{\ol{K}}) = 4 -
d$.  On the other hand, the special fiber of $\mc{X}$ is given by the affine
equation $y^2 = x^d$, so it is a union
of two copies of $\proj^1_k$ meeting at one point.  Thus
$\chi(\mc{X}_{k}) = 2 - 0 + 1 = 3$.  So
$-\Art(\mc{X}/\mc{O}_K) = d - 1 + \delta$, where $\delta$ is the Swan conductor.  Using, e.g.,
Proposition~\ref{Pswanconductor}, one calculates $\delta
= \nu_K(d)$.  We also have $\nu_K(\disc (f)) = \nu_K(d) + d - 1$.  Thus the conductor-discriminant inequality is an equality in this
case.

Note that the special fiber of $\mc{X}$ does \emph{not} have simple
normal crossings when $d \geq 4$, since the irreducible components do
not meet transversely.  By Equation~\ref{Partinconductor},
the minimal snc-model $\mc{X}'$ of $X$ has $-\Art(\mc{X}'/\mc{O}_K) >
-\Art(\mc{X}/\mc{O}_K) = \disc(f)$, which means that $\mc{X}'$ does \emph{not}
satisfy the conductor-discriminant inequality.  So minimal snc-models
are insufficient for our purposes. 
\end{example}

\section*{Notation and conventions}
Throughout, $K$ is a Henselian field with respect to a
discrete valuation $\nu_K$ with residue characteristic not $2$.   We further assume
that the residue field $k$ of $K$ is \emph{algebraically closed}.
We denote \emph{fixed} separable and algebraic
closures of $K$ by $K^{\sep} \subseteq \ol{K}$.  All algebraic
extensions of $K$ are assumed to live inside $\ol{K}$.  This means that
for any algebraic extension $L/K$, there is a preferred embedding
$\iota_L \in \Hom_K(L, \ol{K})$, namely the inclusion.
We fix a uniformizer $\pi_K$ of $\nu_K$
and normalize $\nu_K$ so that $\nu_K(\pi_K) = 1$.

For a finite separable field extension $L/K$, we let $\disc(L/K)$
denote the discriminant of the field extension $L/K$ and let
$\Delta_{L/K} \colonequals \nu_K(\disc(L/K))$. For any separable
polynomial $f \in K[x]$, we let $\disc(f)$ (resp.\ $\disc'(f)$) denote the discriminant of
the polynomial $f$ viewed as a polynomial of degree
$\deg(f)$ (resp.\ degree $2 \lceil \deg(f)/2 \rceil$) and let
$\Delta_{f,K} \colonequals \nu_K(\disc(f))$. Note that with this
convention, if $f=cg$ for some monic polynomial $g$, then
$\Delta_{f,K} = 2\nu_K(c)(\deg(g)-1)+\Delta_{g,K}$.
We will suppress the index $K$ whenever the field is clear.

For an integral $K$-scheme or $\mc{O}_K$-scheme $S$, we denote the corresponding function
field by $K(S)$. If $\mc{Y} \rightarrow \mc{O}_K$ is an arithmetic
surface, an irreducible codimension 1 subscheme of $\mc{Y}$ is called
\emph{vertical} if it lies in a fiber of $\mc{Y} \to \mc{O}_K$, and
\emph{horizontal} otherwise. Let $f \in K(\mathcal{Y})$. We denote the
divisor of zeroes of $f$ by $\divi_0(f)$. If $\divi(f) = \sum_i m_i
\Gamma_i$, call a component $\Gamma_i$ for which $m_i$ is odd an
\emph{odd component} of $\divi(f)$ on $\mathcal{Y}$. Similarly define
\emph{even component} of $\divi(f)$ (this includes every component
$\Gamma_i$ for which $m_i = 0$).

If $C$ and $D$ are divisors on a regular, proper, flat relative curve
over a Dedekind scheme, we write $(C, D)$ for their intersection number.

\section*{Funding}
AO was supported by National Science Foundation grants DMS-1602054, DMS-1900396 and DMS-2047638, by a grant from the Simons Foundation (\#706038, AO), as well as by a PSC-CUNY Award, jointly funded by the Professional Staff Congress and the City University of New York. PS is supported by the Simons Collaboration in Arithmetic Geometry, Number Theory, and Computation via the Simons Foundation grant 546235.

\section*{Acknowledgements}
The proof of inequality (\ref{Efirstreformulation}) was originally a
much more involved argument that spanned over 40 pages explicitly
describing the regular model $\mathcal{Y}_f$, see \cite{OS1}
and \cite{OS2}.  We thank
the anonymous referee of \cite{OS1} for showing us the
argument of Proposition~\ref{Pinduction}, allowing us to drastically
shorten the paper.

We also acknowledge the hospitality of the
Mathematisches Forschungsinstitut Oberwolfach, where we participated in the
``Research in Pairs'' program during the writing of this paper.
Lastly, we would like to thank Dino Lorenzini for useful conversations.

\section{The discriminant bonus and regular models}\label{Sexcess}
Recall that $X/K$ is a hyperelliptic curve with affine equation $y^2 =
f(x)$, where $X \to \proj^1$ is the projection to the $x$-coordinate.  The \emph{discriminant} of such an equation is the
integer $\nu_K(\disc'(f))$.  Changing $x$-coordinates on $\P^1_K$ using an element of $\GL_2(\mc{O}_K)$ does not change the valuation of the discriminant of an equation.  Since $k$ is algebraically closed, we may assume that $f$ has even degree by such a change of coordinates (\cite[Section~1.3]{PadmaRational}), and we may even assume that no root of $f$ specializes to $\infty$.  That is, we may assume that all roots of $f$ lie in $\mc{O}_K$.  If $f$ has repeated roots, then $\disc'(f) = 0$ and (\ref{Econddiscineq}) is satisfied automatically, so assume also that $f$ is separable.  Lastly, since $K$ is Henselian with algebraically closed residue field of characteristic not $2$, the group $K^{\times}/(K^{\times})^2$ has two elements, whose coset representatives are $1$ and $\pi_K$.  So after multiplying $f$ by squares, which does not change the isomorphism class of $X$, we may assume that the leading coefficient of $f$ is $1$ or $\pi_K$.
Thus, for the remainder of the paper, we make the following assumption:

\begin{assumption}\label{Afform}
The polynomial $f(x)$ has even degree, is separable, and has irreducible factorization
$\pi_K^bf_1 \ldots f_r$, where the $f_i \in \mc{O}_K[x]$ are monic irreducible polynomials and $b \in \{0, 1\}$.
\end{assumption}

The argument above proves the following proposition.
\begin{prop}\label{Preduce2}
If the conductor-discriminant inequality (Theorem~\ref{Preduce}) holds for all $f$ satisfying Assumption~\ref{Afform}, then it holds for all $f \in \mc{O}_K[x]$.
\end{prop}

For $f$ satisfying Assumption~\ref{Afform}, we define $K_i =
K[x]/f_i(x)$ for $1 \leq i \leq r$.

\begin{prop}\label{Pswanconductor}
The Swan conductor of $X$ equals $\sum_{i=1}^r (\Delta_{K_i/K} - \deg
f_i + 1) = r - \deg(f) + \sum_{i=1}^r \Delta_{K_i/K}$.
\end{prop}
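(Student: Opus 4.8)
The plan is to reduce the computation of the Swan conductor of $X$ to a statement purely about the wild ramification of the polynomial $f$, using the fact that $X \to \proj^1_K$ is a degree $2$ cover ramified precisely at the roots of $f$ (and possibly $\infty$, but under Assumption~\ref{Afform} we have arranged that no root of $f$ specializes to $\infty$ and $\deg f$ is even, so the branch locus on $\proj^1_K$ is exactly the set of roots of $f$). First I would recall that, since $\cha k \neq 2$, the $\ell$-adic (with $\ell \neq \cha k$) Galois representation $H^1_{\mathrm{et}}(X_{\ol K}, \Q_\ell)$ is computed by a transfer/induction from the branch locus: concretely, writing $\rho$ for the permutation representation of $G_K = \Gal(\ol K/K)$ on the roots of $f$, the Swan conductor of $H^1_{\mathrm{et}}(X_{\ol K},\Q_\ell)$ equals the Swan conductor of $\rho$ (the Swan conductor of a permutation module is insensitive to the trivial summand, and the Euler characteristic bookkeeping for a double cover of $\proj^1$ branched at an even number of points shows the wild part of $H^1$ matches the wild part of $\rho$). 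This is the kind of statement that goes back to the conductor-discriminant formula and Ogg's formula for the conductor of a curve; I would cite the relevant reference (e.g.\ Serre or the Ogg--Saito circle of ideas) rather than reprove it.

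Second, I would decompose $\rho$ according to the irreducible factorization $f = \pi_K^b f_1 \cdots f_r$: the $G_K$-set of roots of $f$ is the disjoint union of the $G_K$-sets of roots of the $f_i$, and the $G_K$-set of roots of $f_i$ is $G_K/G_{K_i}$ where $K_i = K[x]/(f_i)$ and $G_{K_i} = \Gal(\ol K/K_i)$. Hence $\rho = \bigoplus_{i=1}^r \mathrm{Ind}_{G_{K_i}}^{G_K} \mathbf{1}$, and by additivity of the Swan conductor over direct sums, $\mathrm{Sw}(\rho) = \sum_{i=1}^r \mathrm{Sw}(\mathrm{Ind}_{G_{K_i}}^{G_K}\mathbf{1})$.

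Third, I would apply the standard formula relating the Artin/Swan conductor of an induced representation to the discriminant of the corresponding field extension: for a finite separable extension $L/K$ of local fields (here $L = K_i$), the Artin conductor of $\mathrm{Ind}_{G_L}^{G_K}\mathbf{1}$ equals $\Delta_{L/K} = \nu_K(\disc(L/K))$ (this is the conductor-discriminant formula), and subtracting the tame part $[L:K] - 1 = \deg f_i - 1$ gives $\mathrm{Sw}(\mathrm{Ind}_{G_{K_i}}^{G_K}\mathbf 1) = \Delta_{K_i/K} - \deg f_i + 1$. Summing over $i$ yields the first expression $\sum_{i=1}^r(\Delta_{K_i/K} - \deg f_i + 1)$, and since $\sum_{i=1}^r \deg f_i = \deg f$ (note the factor $\pi_K^b$ contributes no roots and does not affect the branch locus), this equals $r - \deg f + \sum_{i=1}^r \Delta_{K_i/K}$, as claimed.

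The main obstacle is the first step: justifying cleanly that the Swan conductor of $H^1_{\mathrm{et}}(X_{\ol K},\Q_\ell)$ agrees with the Swan conductor of the permutation representation on the branch points. One has to be a little careful about the role of $\infty$ (handled by the even-degree normalization in Assumption~\ref{Afform}) and about the distinction between $H^1$ and the reduced permutation module; the cleanest route is to use that wild inertia acts through a $p$-group ($p = \cha k \neq 2$), so its action on $\mu_2$ is trivial, the double cover is ``wildly unramified'' in a suitable sense, and the contribution of the covering automorphism decouples, leaving the wild part of $H^1$ governed entirely by how $G_K$ permutes the ramification points downstairs. Once this is granted, the remaining steps are formal manipulations with the conductor-discriminant formula and additivity of Swan conductors, so I would keep the write-up short, citing the classical inputs. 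One can also simply invoke \cite[Proposition~1]{Li:cd} together with the relation $\deg(f) = \sum \deg f_i$ if a self-contained derivation is not desired, but the induction argument above is the conceptually transparent one.
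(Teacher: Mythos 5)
Your proposal is correct, and it takes essentially the same route as the paper: the paper simply cites \cite[Theorem~1.20(i)]{DDMM} and observes that the argument carries over when $K$ is Henselian with algebraically closed residue field (so all residue degrees are $1$), while you are spelling out the content of that argument --- reduce $\mathrm{Sw}(H^1_{\mathrm{et}}(X_{\ol K},\Q_\ell))$ to $\mathrm{Sw}(\rho)$ using that the quadratic twist is trivial on the pro-$p$ wild inertia group, decompose $\rho = \bigoplus_i \mathrm{Ind}_{G_{K_i}}^{G_K}\mathbf{1}$, and apply the conductor-discriminant formula plus the fact that each $K_i/K$ is totally ramified so the tame part of the Artin conductor is $\deg f_i - 1$. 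One small correction to your closing remark: \cite[Proposition~1]{Li:cd} gives the formula $-\Art(\mc{X}/\mc{O}_K) = n - 1 + \phi$ for a regular model and does not by itself produce the Swan conductor in this form; the off-the-shelf reference you want there is \cite[Theorem~1.20(i)]{DDMM}, which is exactly what the paper cites.
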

\begin{proof}
The argument in \cite[Theorem 1.20(i)]{DDMM} for $K$ a local field
works also for $K$ Henselian discretely valued with algebraically closed residue field, with the added simplification that all residue degrees are $1$.
\end{proof}

\begin{defn}\label{Ddiscbonus}
The \emph{discriminant bonus of $f$ over $K$}, written $\db_K(f)$, is the
quantity $\Delta_{f, K} - \sum_{i=1}^r \Delta_{K_i/K}$. 
\end{defn}

\begin{remark}\label{Rdiscprop}
  If $f = \pi_K^b f_0$ where $f_0$ is monic, then
$\Delta_{f,K} = \Delta_{f_0,K} + 2b(\deg(f) - 1)$, so 
\begin{equation*}\label{Edbdecomp}
\db_K(f) = \db_K(f_0) + 2b(\deg(f) - 1).  
\end{equation*}
\end{remark}

\begin{remark}\label{Rcolength}
If $f_0$ is monic, then $\db_K(f_0) = \length_{\mc{O}_K} (d_{C/A}/d_{B/A})$, where $B = \mc{O}_K[x]/f_0$ and $C$ is the integral closure of $B$ in its total ring of fractions.
As a consequence of \cite[III, \S2, Proposition 5]{SerreLF}, we get $\db_K(f_0) =
2\length_{\mc{O}_K}(C/B)$. 
\end{remark}

We now obtain a formula for the Artin conductor. Let $\mc{Y}$ be a
regular model of $\P^1_K$ and let $\mc{X}$ be the normalization of
$\mc{Y}$ in $K(x)[y]/(y^2-f(x))$. Let $B$ be the branch locus of
$\mc{X} \to \mc{Y}$. Write $\mc{Y}_s$ and $\mc{Y}_{\ol{\eta}} \cong
\proj^1_{\ol{K}}$ for the special and geometric generic fibers of
$\mc{Y}$, respectively, and write $B_s$ and $B_{\ol{\eta}}$ for the
special and geometric generic fibers of $B$, respectively.  Let
$N_{\mc{Y}}$ be the number of irreducible components of $\mc{Y}_s$,
and let $N_{\mc{Y},\odd}/N_{\mc{Y},\even}$ be the
number of odd/even vertical components of $\divi(f)$, so $N_{\mc{Y}} =
N_{\mc{Y}, \odd} + N_{\mc{Y}, \even}$.
\begin{prop}\label{Partincalculation} Keep the notation from the paragraph above. Assume that the odd components of $\divi_0(f)$ are regular and pairwise disjoint. Then $\mc{X}$ is regular and we have
$$-\Art(\mc{X}/\mc{O}_K) = 2(N_{\mc{Y}} - 1) - 2N_{\mc{Y},\mathrm{odd}} +
\sum_{i=1}^r \Delta_{K_i/K} = 2(N_{\mc{Y},\mathrm{even}}-1) + \sum_{i=1}^r \Delta_{K_i/K}$$
\end{prop}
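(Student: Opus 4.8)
The claim is a formula for $-\Art(\mc X/\mc O_K)$ under the hypothesis that the odd components of $\divi_0(f)$ are regular and pairwise disjoint. The first half — that $\mc X$ is regular — should follow from the standard double-cover criterion: since $\cha k \neq 2$, the normalization $\mc X \to \mc Y$ is (locally) given by extracting a square root of a function whose divisor has the odd part of $\divi(f)$ as branch locus; when that branch locus is regular and has normal crossings with itself (here, disjoint regular components), the double cover is regular. I would cite the relevant statement (e.g.\ the argument in \cite{PadmaRational} or a lemma on double covers) rather than reprove it.

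For the conductor formula, the plan is to use the decomposition $-\Art(\mc X/\mc O_K) = n - 1 + \phi$ from \eqref{Partinconductor} (Liu's Proposition~1), where $n$ is the number of irreducible components of $\mc X_s$ and $\phi$ is the conductor exponent, together with the Swan conductor computation of Proposition~\ref{Pswanconductor}. Concretely: (1) compute $n$, the number of components of the special fiber of $\mc X$, in terms of $N_{\mc Y}$ and $N_{\mc Y,\odd}$ — over each of the $N_{\mc Y,\even}$ even components the double cover splits into two components, and over each of the $N_{\mc Y,\odd}$ odd components it stays irreducible (ramified), giving $n = 2N_{\mc Y,\even} + N_{\mc Y,\odd} = 2N_{\mc Y} - N_{\mc Y,\odd}$; (2) recall $\phi = \epsilon + \delta$ where $\epsilon$ is the tame part and $\delta = $ Swan conductor; since $k$ is algebraically closed and the special fiber geometry is already ``defined over $k$'', the tame conductor $\epsilon$ vanishes, so $\phi = \delta = \sum_{i=1}^r(\Delta_{K_i/K} - \deg f_i + 1) = r - \deg f + \sum_i \Delta_{K_i/K}$ by Proposition~\ref{Pswanconductor}; (3) assemble: $-\Art = (2N_{\mc Y} - N_{\mc Y,\odd}) - 1 + r - \deg f + \sum_i \Delta_{K_i/K}$. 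The two asserted expressions then must be matched against this, which forces the bookkeeping identity $2N_{\mc Y} - N_{\mc Y,\odd} - \deg f + r = 2N_{\mc Y} - 2N_{\mc Y,\odd}$, i.e.\ $N_{\mc Y,\odd} = \deg f - r$. This last equality should come from counting: $\divi_0(f)$ has $\deg f$ horizontal branches (roots of $f$, with multiplicity), the $r$ irreducible factors $f_i$ contribute $r$ distinct horizontal components of $\divi_0(f)$, and — because the odd components are regular and disjoint, so each horizontal component of odd multiplicity specializes to a smooth point of a unique vertical component and no two collide — the number of odd \emph{vertical} components equals the number of horizontal components of $\divi(f)$ minus $r$... actually more carefully, $N_{\mc Y,\odd}$ counts odd vertical components, and the parity of the multiplicity of a vertical component $Y_j$ is the parity of $\sum(\text{orders of horizontal components through }Y_j) + (\text{multiplicity of }f\text{ along }Y_j\text{ itself})$; I would need to show this total parity count forces $N_{\mc Y,\odd} \equiv \deg f - r \pmod 2$ in the right refined sense.

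**Main obstacle.** The genuine work is Step (1) combined with verifying $N_{\mc Y,\odd} = \deg f - r$ (or whatever the exact reconciling identity is) — i.e.\ correctly counting components of $\mc X_s$ and correctly tracking which vertical components of $\mc Y$ are odd versus even, using that the disjointness/regularity hypothesis pins down the specialization behaviour of the horizontal divisor. I expect the cleanest route is to work directly on $\mc Y$ with the function $f$: classify each vertical component of $\mc Y$ by the parity of $\nu_{Y_j}(f)$, note the double cover is split exactly over the even ones and ramified over the odd ones (away from finitely many points, which don't affect component counts), deduce $n = 2N_{\mc Y,\even} + N_{\mc Y,\odd}$, and then the second displayed equality in the proposition ($2(N_{\mc Y,\even}-1) + \sum_i \Delta_{K_i/K}$) is simply the more useful repackaging, true once one knows $2N_{\mc Y} - 2N_{\mc Y,\odd} = 2N_{\mc Y,\even}$ — which is automatic! — so in fact the only substantive identity needed beyond Steps (1)--(2) is $N_{\mc Y,\odd} + \deg f - r$ being even, or rather the precise statement $(2N_{\mc Y} - N_{\mc Y,\odd}) - 1 + (r - \deg f + \sum\Delta_{K_i/K}) = 2(N_{\mc Y,\even}-1)+\sum\Delta_{K_i/K}$, which reduces to $N_{\mc Y,\odd} = \deg f - r$. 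I would prove this by observing $\deg f - r = \sum_i(\deg f_i - 1)$ and matching each $\deg f_i - 1$ to the contribution of $f_i$'s horizontal component to odd vertical components via the regularity hypothesis — the disjointness ensures no cancellation between distinct factors. Everything else is formal.
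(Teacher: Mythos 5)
The route you propose does not work, and the obstructions are not just bookkeeping. The paper instead invokes the Riemann--Hurwitz/Lefschetz identity of \cite[Lemma 2.2]{PadmaTame}, which expresses $-\Art(\mc X/\mc O_K)$ as $2(\chi(\mc Y_s)-\chi(\mc Y_{\ol\eta})) - (\chi(B_s)-\chi(B_{\ol\eta})) + \delta$ in terms of Euler characteristics of $\mc Y$ and of the branch locus $B$, and then computes each piece; this avoids having to know the component count of $\mc X_s$ and the tame conductor separately.

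Three specific things break in your plan. First, the step ``$\phi=\delta$ since the tame conductor $\epsilon$ vanishes'' is false: with $K$ strictly Henselian, $\epsilon = \dim H^1(X_{\ol K}) - \dim H^1(X_{\ol K})^{I}$, which vanishes precisely when $X$ has good reduction. Your proposed route via $-\Art = n-1+\phi$ needs the full conductor exponent $\phi$, and there is no shortcut to just the Swan part; indeed, the whole point of going through the Euler characteristic formula is to package the tame contribution geometrically. Second, the count $n = 2N_{\mc Y,\even} + N_{\mc Y,\odd}$ is not correct: over an even component the double cover is unramified at the generic point, but whether the preimage is one irreducible component or two depends on whether the relevant unit reduces to a square in the function field of that component. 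In Example~\ref{Erunning2} ($y^2 = x^8-\pi_K^3$), the model $\mc Y$ has $N_{\mc Y,\even}=3$, $N_{\mc Y,\odd}=1$, so your formula predicts $n=7$, but $\mc X_s$ has $6$ components (Figure~\ref{Frunning2}); the preimage of the even $v_{1/2}$-component is a \emph{single} irreducible component. Third, the reconciling identity you need ($N_{\mc Y,\odd} = \deg f - r$, or $\deg f - r - 1$ after fixing an arithmetic slip) equates a quantity that depends heavily on the model $\mc Y$ with a quantity determined by $f$ alone, so it cannot hold; in the same example, $N_{\mc Y,\odd}=1$ while $\deg f - r = 7$. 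The paper's Euler-characteristic route works precisely because $\chi$ does not distinguish between a split unramified degree-2 cover of a $\P^1_k$ (contributing $2+2=4$) and an irreducible unramified one (a smooth conic, also contributing $4$), so the split-versus-nonsplit distinction that defeats your component count never needs to be resolved.
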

\begin{proof}
By \cite[Lemma~2.1]{PadmaRational}, the model $\mc{X}$ is regular. By \cite[Lemma 2.2]{PadmaTame}, we have
$$-\Art(\mc{X}/\mc{O}_K) = 2(\chi(\mc{Y}_s) - \chi(\mc{Y}_{\ol{\eta}})) -
(\chi(B_s) - \chi(B_{\ol{\eta}})) + \delta,$$ where $\delta$ is the
Swan conductor of $X$. We will use $H^i$ and $h^i$ to denote the \'{e}tale cohomology groups and their dimensions respectively. Now, $\mc{Y}_s$ and
$\mc{Y}_{\ol{\eta}}$ both have trivial $H^1$ and one-dimensional
$H^0$, while $h^2(\mc{Y}_s) = N_{\mc{Y}}$ and
$h^2(\mc{Y}_{\ol{\eta}}) = 1$.  So $\chi(\mc{Y}_s) - \chi(\mc{Y}_{\ol{\eta}})
= N_{\mc{Y}} - 1$.  Since $\deg(f)$ is even by
Assumption~\ref{Afform}, it follows that $B$ consists
of precisely all the odd components of $\divi_0(f)$. Since the odd components of $\divi_0(f)$ are regular and pairwise disjoint, it follows that as a closed subset, $B_s$ is a disjoint union of closed points and closed codimension $1$ sets: the closed points correspond to points where the horizontal components of $\divi_0(f)$ specialize, so there is exactly one for each irreducible factor of $f$, and the codimension $1$ sets correspond to the vertical components appearing with odd multiplicity in $\divi(f)$ on $\mathcal{Y}$. By \cite[Lemma~7.1]{ObusWewers}, these irreducible components are all isomorphic to $\P^1_k$ and therefore have trivial $H^1$, and since $\chi$ is an additive functor, it follows that $\chi(B_s) = r + 2N_{\mc{Y},\mathrm{odd}}$.  Since $\deg(f)$ is even, $B_{\ol{\eta}}$ consists of $\deg(f)$ points and therefore $\chi(B_{\ol{\eta}}) = \deg(f)$.  
Lastly, by Proposition \ref{Pswanconductor}, $\delta = r - \deg(f) +
\sum_{i=1}^r \Delta_{K_i/K}$.  Putting everything together proves the proposition.
\end{proof}

\begin{remark}\label{Rreduction} In light of
  Proposition~\ref{Partincalculation} and Definition~\ref{Ddiscbonus} of the
discriminant bonus, in
order to prove the conductor-discriminant inequality for $f$
satisfying Assumption~\ref{Afform}, it suffices to
find a regular model $\mc{Y}_f$ of $\P^1_K$ on which the odd components of
$\divi_0(f)$ are regular and disjoint, such that
\begin{equation}\label{Ereformulation}
\db_K(f) \geq 2(N_{\mc{Y}_f,\mathrm{even}} - 1).
\end{equation} 
We say that such a model $\mc{Y}_f$ \emph{realizes the
  conductor-discriminant inequality for $f$}.
If the inequality in \eqref{Ereformulation} is an
equality, then the normalization $\mc{X}_f$ of $\mc{Y}_f$ in $K(X)$
satisfies $-\Art(\mc{X}_f/\mc{O}_K) = \Delta_{f,K}$, so we
say that $\mc{Y}_f$ \emph{realizes the conductor-discriminant
  equality} for $f$. 
\end{remark}

\section{Proof of inequality \ref{Ereformulation}}\label{Sinduction}

\subsection{Preliminary lemmas}
We begin with a pair of prelliminary lemmas about divisors on
arithmetic surfaces.

\begin{lemma}\label{Laddlength}
 Suppose that $C'$ and $C''$ are reduced, relatively prime, effective divisors on a
regular proper flat relative curve $\mc{X}$
over $\mc{O}_K$.  
If $C = C' + C''$ and $\widetilde{C}$, $\widetilde{C}'$, $\widetilde{C}''$ are the
respective normalizations of $C$, $C'$, $C''$, we have that
$$\length_{\mc{O}_K}(\mc{O}_{\widetilde{C}}/\mc{O}_C) =
\length_{\mc{O}_K}(\mc{O}_{\widetilde{C}'}/\mc{O}_{C'}) +
\length_{\mc{O}_K}(\mc{O}_{\widetilde{C}''}/\mc{O}_{C''}) + (C',C'').$$
\end{lemma}

\begin{proof}
Since $\mc{O}_{\widetilde{C}} \cong \mc{O}_{\widetilde{C}'} \times
\mc{O}_{\widetilde{C}''}$, it suffices to show that
\begin{equation}\label{Elengthcomparison}
  \length_{\mc{O}_K} (\mc{O}_{C'} \times \mc{O}_{C''} / \mc{O}_C) = (C',
  C'').
\end{equation}
It suffices to check this locally at each point $P$
of $C' \cap C''$.  If $A = \mc{O}_{\mc{X},P}$, then both $C'$ and
$C''$ are principal in $\Spec A$,
so let $C' = \divi(g')$ and $C'' = \divi(g'')$.  Then $C = \divi(gg')$
and 
(\ref{Elengthcomparison}) follows from the exact sequence
$$0 \to A/gg' \to A/g' \times A/g'' \to A/(g', g'') \to 0$$ and the
fact that the local intersection number $(C', C'')_P$ equals
$\length_{\mc{O}_K}(A/(g', g''))$.
\end{proof}

The following lemma is an adaptation of \cite[V, Proposition
3.7]{Hartshorne} and \cite[Ex.\ 9.2.12]{LiuBook} to the case of an arithmetic surface with a possibly
horizontal divisor.

\begin{lemma}[{cf.\ \cite[Prop.\ 3.7]{Hartshorne}}]\label{Lmult}
  Let $C$ be an effective divisor on a regular, proper, flat relative
  curve $\mc{X}$ over $\mc{O}_K$.  Let $\pi \colon \mc{X}' \to \mc{X}$ be the
  blowup at a multiplicity $\mu$ closed point $x$ of $C$, and let $C'
  \to C$ be the strict transform.  Then
  $\length_{\mc{O}_K}(\mc{O}_{C'}/\mc{O}_C) = \mu(\mu-1)/2$.  
\end{lemma}

\begin{proof}
For this proof, we may restrict to an affine neighbourhood $U \colonequals \Spec A$ of $x$ in  $\mc{X}$ such that
the maximal ideal $m_{U, x}$ is generated by two global functions $u$
and $v$ on $U$, and by shrinking $U$, we may assume that the curve $C$
is cut out by a single polynomial equation $\phi = 0$.  
As in the proof of \cite[Proposition 9.2.23]{LiuBook}, we may write
$\phi = P(u, v) + Q$ for a homogeneous polynomial $P$ of degree $\mu$ with coefficients in $\mc{O}_{U,x}^\times$ and for  $Q(u, v) \in m_{U, x}^{\mu + 1}$. 
Up to a linear change of generators of $m_{U, x}$, we may further assume that $w \colonequals \pi^*(u)/\pi^*(v)$ is a regular function at every point in the preimage of $x$ in $C'$.
Up to further shrinking $U$ if necessary, we may assume that $C' \cap
\pi^{-1}(U)$ is contained in the affine $U \times \mathbb{A}^1$ where
$w$ is regular, where $C'$ is cut out by the equations $wu=v$ and
$\phi/u^\mu$, and furthermore $\deg_w P(1,w) = \mu$.

Concretely, letting $g = Q(1, w)/u^{\mu}$, we have $\phi/u^{\mu} =
P(1,w) + ug$ and the ring of regular functions on $C' \cap
\pi^{-1}(U)$ corresponds to the ring $R \colonequals A[w]/(wu-v,P(1,w)
+ ug)$. Let $B \colonequals A/(\phi)$ be the coordinate ring of $C
\cap U$ and let $\mathfrak{m}$ be the image of the ideal $(u,v)
\subseteq A$ in $B$.  Then $\mf{m}$ is the maximal ideal of $x$ in
$B$, and $B/\mathfrak{m} \cong k$.  Under the natural inclusion $B
\hookrightarrow R$ given by the identity on $A$, we have
$\length_{\mc{O}_K}(\mc{O}_{C'}/\pi^{*} \mc{O}_{C}) =
\length_{\mc{O}_K} R/B$. We will now write down an explicit
composition series for the inclusion $B \subseteq R$ of $\mc{O}_K$-modules, such that the associated graded $\mc{O}_K$-module is isomorphic to $k^{\mu(\mu-1)/2}$, where $k$ is the residue field of $\mc{O}_K$. Since $\length_{\mc{O}_K}{k} = 1$, this proves the result. 

For $0 \leq i \leq \mu-1$ and $0 \leq j \leq \mu - 1 - i$, let $M^i$ be
the $B$-submodule of $R$ spanned by
$1$, $w$, $w^2$,\ldots, $w^i$, and let $M^i_j$ be the
$B$-submodule of $R$ spanned by $M^i$ and the elements
$v^{\mu - 1 - i - j}w^{i+1}$, $v^{\mu - 1 - i - (j-1)}w^{i + 1}$,\ldots, $v^{\mu - 2 - i}
w^{i+1}$ (thus, $M^i_0 = M^i$ and $M^i_{\mu-1-i} = M^{i+1}$). Since $\deg_w P(1,w) = \mu$, we also have that $M^{\mu-1} = R$.  In
particular, we have
\begin{align*}
B  = M^0 = M^0_0 \subseteq M^0_1 \subseteq \cdots
\subseteq M^0_{\mu-1} &= \\ M^1 = M^1_0 \subseteq M^1_1 \subseteq M^1_2 \subseteq \cdots
\subseteq M^1_{\mu - 2} &= \\ M^2 = M^2_0 \subseteq M^2_1 \subseteq M^2_2 \subseteq \cdots
\subseteq M^2_{\mu - 3} &= \\ 
\cdots \subseteq M^{\mu-2}_1 &= M^{\mu-1} = R. 
\end{align*}
Furthermore, for all $i \geq 0$, $j \geq 1$ with $i + j \leq \mu - 1$,
$M^i_j$ is generated over $M^i_{j-1}$ as a $B$-module by
$\alpha^i_j := v^{\mu-1-i-j}w^{i+1}$.  Since $$u\alpha^i_j = u
v^{\mu-1-i-j}w^{i+1} = v^{\mu - i - j} w^i \in M^{i-1}_j \subseteq
M^i_{j-1} \text{ and }  v\alpha^i_j = v^{\mu-i-j}w^{i+1} \in M^i_{j-1},$$ we have that
$\mathfrak{m}$ annihilates $M^i_j/M^i_{j-1}$, or equivalently that 
$M^i_j/M^i_{j-1} \cong k$. Thus we have constructed
a composition series for $R/B$ of length $\mu(\mu-1)/2$ where the successive quotients are all isomorphic to the residue field $k$ of $\mc{O}_K$. \qedhere
\end{proof}

\subsection{Proof of Theorem~\ref{Preduce}}

Now, let $\mc{Y}$ be regular, proper, flat, relative curve over $\Spec
\mc{O}_K$, and let $D \subseteq \mc{Y}$
be a reduced effective Cartier divisor. Set $(\mc{Y}_0,D_0) = (\mc{Y},D)$ and define a sequence
$(\mc{Y}_n, D_n)$, $n = 0, . . . , N$ as follows. Suppose $(\mc{Y}_{n-1}, D_{n-1})$ is defined. If $D_{n-1}$ is regular, we
set $N = n - 1$ and stop here. Suppose otherwise and let $x_n \in
D_{n-1}$ be a singular point with multiplicity $m_n$.
Let $f_n \colon \mc{Y}_n \to \mc{Y}_{n-1}$ be the blow-up at $x_n$ and $E_n = f_n^{-1}(x_n)$ be the exceptional divisor. Let
$D_n' \subseteq \mc{Y}$ be the proper transform of $D_{n-1}$. Then $f_n^{-1}(D_{n-1})= D_n' + m_nE_n$.  If $m_n$ is even (resp.\ odd), set
$D_n = D_n'$ (resp.\ $D_n = D_n' + E_n$).  Let $\widetilde{D}$ be the
normalization of $D$.

\begin{prop}\label{Pinduction}
  With notation as above, we have
  \begin{equation}\label{Edivisors}
  \#\{n \mid 0 \leq n \leq N \text{ and } m_n \text{ is even}\} \leq
  \length_{\mc{O}_K}(\mc{O}_{\widetilde{D}}/\mc{O}_D),
    \end{equation}
 with equality holding if
  and only if the multiplicity of every $x_n$ in each $D_{n-1}$ for $0 < n \leq N$ is at
  most $3$.
\end{prop}

\begin{proof}
  By induction on $n$, it suffices to show the following inequality:
$$\length_{\mc{O}_K}(\mc{O}_{\widetilde{D}_n}/\mc{O}_{D_n}) \leq \length_{\mc{O}_K}
\mc{O}_{\widetilde{D}_{n-1}}/\mc{O}_{D_{n-1}} - \begin{cases} 0 & m_n
  \text{ odd} \\ 1 & m_n \text{ even.}\end{cases},$$ 
with equality if and only if $m_n \in \{2, 3\}$. 

First, suppose $m_n$ is even. Then, $D_n = D_n'$ is the proper transform of $D_{n-1}$ and we have $\widetilde{D}_n =
\widetilde{D}_{n-1}$.
By Lemma~\ref{Lmult}, we have $\length_{\mc{O}_K} (\mc{O}_{D_n}/\mc{O}_{D_{n-1}}) = m_n(m_n - 1)/2 \geq
1$, since $m_n > 1$. So
\begin{align}
\begin{split}
  \length_{\mc{O}_K}(\mc{O}_{\widetilde{D}_n}/\mc{O}_{D_n}) &=
  \length_{\mc{O}_K}(\mc{O}_{\widetilde{D}_{n-1}}/\mc{O}_{D_n}) \\
  &= \length_{\mc{O}_K}(\mc{O}_{\widetilde{D}_{n-1}}/\mc{O}_{D_{n-1}}) - \length_{\mc{O}_K}(\mc{O}_{D_n}/\mc{O}_{D_{n-1}}) \\
  &\leq \length_{\mc{O}_K}(\mc{O}_{\widetilde{D}_{n-1}}/\mc{O}_{D_{n-1}}) - 1,
\end{split}
\end{align}
 which proves the proposition in this case.

Now, suppose $m_n$ is odd.  Recall that $D_n = D_n' + E_n$, where
$D_n$ is the strict transform of $D_{n-1}$.  By Lemma~\ref{Laddlength}
applied to $D_n'$ and the regular divisor $E_n$, we have 
\[ \length_{\mc{O}_K} (\mc{O}_{\widetilde{D}_n} / \mc{O}_{D_n}) =
\length_{\mc{O}_K} (\mc{O}_{\widetilde{D}_n'} / \mc{O}_{D_n'}) + (E_n,
D_n').\]    
By \cite[V, Corollary 3.7]{Hartshorne}\footnote{The result in \cite{Hartshorne} is stated only for
  projective surfaces, but the proof goes through verbatim in the
  arithmetic surface case.}, we have $(E_n, D_n') = m_n$.  
Since $D_n'$ is the strict transform of $D_{n-1}$, we have
$\widetilde{D}_n' = \widetilde{D}_{n-1}$.  Putting all this together once again with Lemma~\ref{Lmult} yields 
\begin{align}
\begin{split}
  \length_{\mc{O}_K} (\mc{O}_{\widetilde{D}_n} / \mc{O}_{D_n}) &=
  \length_{\mc{O}_K} (\mc{O}_{\widetilde{D}_n'} / \mc{O}_{D_n'}) + m_n \\
  &= \length_{\mc{O}_K} (\mc{O}_{\widetilde{D}_{n-1}} / \mc{O}_{D_n'}) +
  m_n \\
  &= \length_{\mc{O}_K}(\mc{O}_{\widetilde{D}_{n-1}}/\mc{O}_{D_{n-1}}) -
    \length_{\mc{O}_K}(\mc{O}_{D_n'}/\mc{O}_{D_{n-1}}) + m_n \\
    &= \length_{\mc{O}_K}(\mc{O}_{\widetilde{D}_{n-1}}/\mc{O}_{D_{n-1}})
    - m_n(m_n-1)/2 + m_n \\
    &\geq \length_{\mc{O}_K}(\mc{O}_{\widetilde{D}_{n-1}}/\mc{O}_{D_{n-1}}),
\end{split}
    \end{align}
since $m_n \geq 3$.
Equality occurs when $m_n = 3$, proving the proposition in this case.
\end{proof}

\begin{corollary}\label{Cmain}
Let $X \to \proj^1_K$ be a hyperelliptic curve given by affine
equation $y^2 = f(x)$ satisfying Assumption~\ref{Afform}. 
In the notation above, let $\mc{Y} = \mc{Y}_0 = 
\proj^1_{\mc{O}_K}$ be the standard smooth model of $\proj^1_K$ with
coordinate $x$.  Lastly, let $D \subseteq \mc{Y}$ be the branch locus
of the normalization of $\mc{Y}$ in $K(X)$.

Then, in the notation
above, the model $\mc{Y}_N$ satisfies the hypotheses of
Remark~\ref{Rreduction}, as well as inequality (\ref{Ereformulation})
with respect to $f$.  That is, in the language of
Remark~\ref{Rreduction}, we say that $\mc{Y}_N$ realizes the
conductor-discriminant inequality of $f$. 

Furthermore, if the multiplicity $m_n$ of each $x_n$ in $D_{n-1}$ the notation above is at most $3$, then
$\mc{Y}_N$ realizes the conductor-discriminant equality for
$f$. 
\end{corollary}

\begin{proof}
Observe that $D = D_0 \equiv \divi(f) \pmod{2
\, \text{Div}(\mc{Y}_0)}$ as divisors on $\mc{Y}_0$, and the same congruence holds with $D_i$ in place of $D_0$ on
$\mc{Y}_i$. 
In particular, $D_N$ and the odd part of
$\divi(f)$ have the same support on $\mc{Y}_N$.  Since
$D_N$ is regular, the model $\mc{Y}_N$ satisfies the hypotheses of
Remark~\ref{Rreduction}.

Let $f = \pi^bf_1\cdots f_r$ with $b \in \{0, 1\}$ as in Assumption~\ref{Afform}.  In
Proposition~\ref{Pinduction}, the left hand side of (\ref{Edivisors})
counts all even
vertical components of $\divi(f)$ on $\mc{Y}_N$, except possibly the strict
transform of the special fiber $S$ of $\mc{Y}_0$.  Since this strict
transform is even for $\divi(f)$ exactly when $b = 0$, we have that the left
hand side of (\ref{Edivisors}) equals $N_{\mc{Y}_N, \text{even}} - (1 - b)$.  

Let $D_{\text{horiz}}$ be the horizontal part of $D$.  Then $D = bS +
D_{\text{horiz}}$.  Since $(S, D_{\text{horiz}}) = \deg(f)$,
Lemma~\ref{Laddlength} applied to $D_{\text{horiz}}$ and the regular
divisor $S$ implies that the
\begin{equation}\label{E:fromvertohorz} \length_{\mc{O}_K}(\mc{O}_{\widetilde{D}}/\mc{O}_D) = \length_{\mc{O}_K}(\mc{O}_{\widetilde{D}_{\text{horiz}}}/\mc{O}_{D_{\text{horiz}}}) +
b\deg(f) .\end{equation}

The sheaf $\mc{O}_{D_{\text{horiz}}}$ is the
sheafification of the $\mc{O}_K$-algebra $B := \mc{O}_K[x]/(f/\pi_K^b)$,
whereas $\mc{O}_{\widetilde{D}_{\text{horiz}}}$ is the sheafification
  of the integral closure $C$ of $B$ in its total ring of fractions.
  By Remark~\ref{Rcolength},
  $$\length_{\mc{O}_K}(\mc{O}_{\widetilde{D}_{\text{horiz}}}/\mc{O}_{D_{\text{horiz}}})
  = \length_{\mc{O}_K}(C/B)
  = (1/2)\db_K(f/\pi_K^b).$$  So, using Remark~\ref{Rdiscprop} for the first equality, 
and Proposition~\ref{Pinduction} and \eqref{E:fromvertohorz} in the inequality below,
\begin{align}\label{Edbineq}
\begin{split}
\db_K(f) &= 2b(\deg(f) - 1) + \db_K(f/\pi_K^b) \\
&= 2b(\deg(f) - 1) + 2 \length_{\mc{O}_K}(\mc{O}_{\widetilde{D}_{\text{horiz}}}/\mc{O}_{D_{\text{horiz}}}) \\
&\geq 2b(\deg(f) - 1) - 2b \deg(f) + 2(N_{\mc{Y}_N, \text{even}} - (1 - b)) \\
&= 2(N_{\mc{Y}_N, \text{even}} - 1),
\end{split}
\end{align} 
which proves the inequality (\ref{Ereformulation}). 
By Proposition~\ref{Pinduction}, equality holds in \eqref{Edbineq}
exactly when each $x_n$ has multiplicity at most $3$ in $D_{n-1}$, proving the last statement of the corollary. 
\end{proof}

\begin{proof}[Proof of Theorem~\ref{Preduce}]
 This is immediate from Proposition~\ref{Preduce2}, Remark~\ref{Rreduction}, and Corollary~\ref{Cmain}.
\end{proof}

\subsection{Proof of Proposition~\ref{P:genus1}}\label{Sequality}

\begin{defn}\label{Dminimalform}
  A reduced effective divisor $D$ on a regular arithmetic surface
  $\mc{Y}$ over $\mc{O}_K$ is \emph{robustly of
  multiplicity $\leq n$} at $P$ if it has multiplicity $\mu_{D,P} \leq n$ at
$P$, and furthermore if $\mu_{D,P} = n$, then $D$ has reducible
tangent cone at $P$.
\end{defn}

\begin{lemma}\label{Lminweierstrasspreserved}
  Let $\mc{Y}$ be a regular arithmetic surface over $\mc{O}_K$, let $D$ be a
  reduced effective divisor on $\mc{Y}$, let $P \in \mc{Y}$ be a
  closed point such that $D$ has multiplicity $\mu_{D, P}$ at $P$.  Let $E$ be the exceptional divisor of
   $\text{Bl}_P(\mc{Y}) \to \mc{Y}$, and let $D'$ be the strict
   transform of $D$ on $\text{Bl}_P(\mc{Y})$.  Let $n \geq 1$.  If $D$ is robustly of
   multiplicity $\leq n$ at $P$, then both $D'$ and $D' + E$ are robustly of
   multiplicity $\leq n$ at every closed point $P' \in D' \cap E$.
\end{lemma}

\begin{proof} 
We first claim that, since $P$ is a closed point on a regular surface, $$\sum_{Q
  \in D' \cap E} \mu_{D', Q} \leq \mu_{D,P}.$$ To prove this, note
that intersection theory on the blow up tells us that \[0 = (\pi^*D, E)
  = (D'+\mu_{D,P}E,E) = (D',E)-\mu_{D,P},\] and so $(D',E) = \mu_{D,P}$,
and it suffices to observe that for each $Q$ in $D' \cap E$, we have
$\mu_{D',Q} \leq i_Q(D',E)$, since $D'$ and $E$ are locally Cartier and the defining equation for $D'$ at $Q$ is also in $\mathfrak{m}_{E,Q}^{\mu_{D',Q}}$. In particular, for $P'
\in D' \cap E$, we have $\mu_{D', P'} \leq
\mu_{D, P},$ with strict inequality whenever the tangent cone
to $D$ at $P$ is reducible.

Let us prove the statement for $D'$.  If $\mu_{D,P} < n$ then
$\mu_{D', P'} \leq \mu_{D,P} < n$.  If
$\mu_{D, P} = n$ then $D$ has reducible tangent cone at $P$, so
$\mu_{D', P} < \mu_{D,P} = n$.  In both cases, $\mu_{D', P} < n$, so
we are done.

Now we prove the statement for $D' + E$, which always has
reducible tangent cone.  So it suffices to show that if
$P' \in D' \cap E$, then $\mu_{D' + E, P'} \leq n$, or equivalently
that $\mu_{D', P'} < n$.  This was proved in the previous paragraph.
\end{proof}

\begin{prop}\label{P:deg3}
If $\deg(f) = 3$ and $y^2=f(x)$ is a minimal Weierstrass equation,
then the process in Proposition~\ref{Pinduction} with $\mc{Y}_0=
\mathbb{P}^1_{\mc{O}_K}$ and $D_0$ the reduced divisor satisfying $D_0
\equiv \divi(f) \pmod{2 \, \text{Div}(\mc{Y}_0)}$ always yields $m_n
\in \{2,3\}$. 
\end{prop}
\begin{proof}
Let $D_n,D_n',E_n,\mc{Y}_n$ be as in Proposition~\ref{Pinduction}.
We will first use the minimality of $f$ to argue
  that every singular point of $D_0$ has multiplicity $\leq
  3$. Suppose not. Assume that $P$ is a point of multiplicity $\geq 4$
  on $D_0$, and observe that $D_0 = \divi(f)$ away from $x = \infty$
  since our Weierstrass equation is minimal. Without loss of generality, we may assume that $P$
  corresponds to the maximal ideal $(x,\pi)$. Write $f(x) =
  \sum_{i=0}^3 c_ix^i$. Since $x,\pi \in \mathfrak{m}_{P}
  \setminus \mathfrak{m}_P^2$, the assumption $\mu \geq 4$ implies
  that $\pi^{4-i} \mid c_i$ for every $i$. Letting $z \colonequals
  x/\pi$, we see that $f(x) = \pi^4 \sum (c_i/\pi^{4-i}) z^i
  \equalscolon \pi^4 \tilde{f}(z)$, with $\tilde{f}(z) \in
  \mc{O}_K[z]$. Letting $\tilde{y} \colonequals y/\pi^2$, we see that
  $\tilde{y}^2=\tilde{f}(z)$ is another integral Weierstrass equation
  for $X$ with associated discriminant $\disc(f)-6$, contradicting the
  minimality of $f$. So every singular point of $D_0$ has multiplicity
  $\leq 3$.
  
If no singular point of $D_0$ has multiplicity $3$, then all singular
points are robustly of multiplicity $\leq 3$, which implies (by
Lemma~\ref{Lminweierstrasspreserved}) that the same is true for all
$D_n$, proving the proposition.

Now, suppose there is a singular point $P$ of $D_0$ of multiplicity
$\mu = 3$ which we take to be $x_1$. If the tangent cone to $D_0$ at
$P$ is reducible, then $D$ is robustly of multiplicity $\leq 3$ at
$P$, and by
Lemma~\ref{Lminweierstrasspreserved} the same is true for all $D_n$ at
all points,
and we are done.  So assume the tangent cone to $D_0$ at $P$ is irreducible,
and assume further (without loss of generality) that $P$ corresponds to the
maximal ideal $\mathfrak{m}_P = (x,\pi)$ in $\mc{O}_K[x]$.  The irreducibility of the tangent cone means that there exists $g
\in \mathfrak{m}_P$ such that $g^3 \equiv f(x)
\pmod{\mathfrak{m}_P^4}$. We have the freedom to add elements of
$\mathfrak{m}_P^2$ to $g$, so we may assume that $g = ax + b \pi$,
where $a$ and $b$ are in $\mc{O}_K^{\times}$.  After an invertible
change of variables, we may thus assume that $g = x$, and so $$f(x)
\equiv x^3 \pmod{\mathfrak{m}_P^4}.$$  This implies that if we write
$f(x) = \sum_{i=0}^3 c_{i}x^i$ with $c_j \in \mc{O}_K$, then
$v(c_0) \geq 4$, $v(c_1) \geq 3$, $v(c_2) \geq 2$, and $v(c_3) = 0$.  On the other
hand, the minimality of the Weierstrass equation implies that either
$v(c_0) < 6$, $v(c_1) < 4$, or $v(c_2) < 2$.  So $v(c_1) = 3$
or $v(c_0) \in \{4, 5\}$.

After blowing up at $P$, the strict transform $D_1'$ of $D$ meets the
exceptional divisor $E_1$ completely in the chart $x = \pi s$, at the
point $Q$ given by $\pi = s = 0$.  On
this chart, we have $f = \pi^3(c_3s^3 + (c_2/\pi)s^2 + (c_1/\pi^2)s +
c_0/\pi^3)$.  So $D_1 = D_1' + E_1$ is cut out by $h := \pi(c_3s^3 + (c_2/\pi)s^2 + (c_1/\pi^2)s +
c_0/\pi^3)$.

Now, $$h \equiv \frac{c_1}{\pi}s + \frac{c_0}{\pi^2} \pmod{\mathfrak{m}_Q^4}.$$
From this we see that if $v(c_0) = 4$, then the multiplicity $m_2$ of $D_1$
at $x_2 := Q$ is $2$.  Otherwise, if $v(c_1) = 3$, then $m_2
= 3$, and the tangent cone is reducible.  In both of these cases, $D_1$ is robustly of
multiplicity $\leq 3$ at $Q$.  By
Lemma~\ref{Lminweierstrasspreserved}, the same is true for all further
$D_n$ at all points, and we are done.

Lastly, if $v(c_1) > 3$ but $v(c_0) = 5$, then $m_1 = 3$ and we take
one more blowup at the point $Q$.  After this blowup, the strict
transform $D_2'$ of $D_1$ meets the exceptional divisor $E_2$
completely in the chart $\pi = st$ at the point $R$ given by $s = t =
0$.  On this chart, we have that $D_2 = D_2' + E_2$ is cut out by
$j := st(c_3s^3 + (c_2/st)s^2 + (c_1/s^2t^2)s + c_0/s^3t^3)/s^2$.
Since $j \equiv c_3s^2t \pmod{\mathfrak{m}_R^4}$ (note that $\pi \in
\mathfrak{m}_R^2$), we see that, taking $x_3 = R$, we have $m_3 = 3$
  and the tangent cone to $D_2$ at $R$ is reducible.  So $D_2$ is
  robustly of multiplicity $\leq 3$ at $R$.  As in the previous cases,
  we are done.
  \end{proof}

\begin{proof}[Proof of Proposition~\ref{P:genus1}]
By Proposition~\ref{P:deg3}, it follows that there exists a minimal
Weierstrass equation $y^2=f(x)$ with $\deg(f)= 3$ such that $m_n \in
\{2,3\}$ for all $n$. Now change coordinates on $\mc{Y}_0 =
\mathbb{P}^1_{\mc{O}_K}$ by an element of $\GL_2(\mc{O}_K)$ (and also
all further $\mc{Y}_n$) to produce a new minimal Weierstrass equation
that satisfies Assumption~\ref{Afform} -- note that this invertible
change of variables does not affect any of the $m_n$. By Corollary~\ref{Cmain},
it then follows that if $\mc{X}_f$ is the normalization of the model $\mc{Y}_f$ in $K(X)$, then $-\Art(\mc{X}_f/\mc{O}_K) = \Delta_{X/K}$.  \qedhere
 \end{proof}
 
 \begin{remark}\label{R:multgoup}
The multiplicity of each singular point of $D_0$ being at most $3$ is
not sufficient to guarantee equality in every step of the induction. For instance, it is possible for $m_1=3$ and $m_2=4$ as the genus $2$ example $y^2= (x-1)(x-2)(x-3)(x-4t^2)(x-5t^2)(x-6t^2)$ over $K=\mathbb{C}((t))$ illustrates. In this case, $D_1 = D_0'+E$, and for calculating $m_2$ we also need to include the contribution coming from the multiplicity of the point on the exceptional divisor.
\end{remark}

\begin{bibdiv}
\begin{biblist}
 
\bib{BW_Glasgow}{article}{
  author={Bouw, Irene I.},
  author={Wewers, Stefan},
  title={Computing $L$-functions and semistable reduction of superelliptic
  curves},
  journal={Glasg. Math. J.},
  volume={59},
  date={2017},
  number={1},
  pages={77--108},
  issn={0017-0895},
}

\bib{BKSW}{article}{
   author={Bouw, Irene I.},
   author={Koutsianas, Angelos},
   author={Sijsling, Jeroen},
   author={Wewers, Stefan},
   title={Conductor and discriminant of Picard curves},
   journal={J. Lond. Math. Soc. (2)},
   volume={102},
   date={2020},
   number={1},
   pages={368--404},
}

\bib{DDMM}{article}{
   author={Dokchitser, Tim},
   author={Dokchitser, Vladimir},
   author={Maistret, C\'{e}line},
   author={Morgan, Adam},
   title={Arithmetic of hyperelliptic curves over local fields},
   journal={Math. Ann.},
   volume={385},
   date={2023},
   number={3-4},
   pages={1213--1322},
   issn={0025-5831},
   review={\MR{4566695}},
   doi={10.1007/s00208-021-02319-y},
}

\bib{Hartshorne}{book}{
   author={Hartshorne, Robin},
   title={Algebraic Geometry},
   series={Graduate Texts in Mathematics},
   volume={52},
  publisher={Springer-Verlag, New York-Berlin},
   date={1977},
   pages={xvi+496},
}

\bib{Kau}{article}{
   author={Kausz, Ivan},
   title={A discriminant and an upper bound for $\omega^2$ for hyperelliptic
   arithmetic surfaces},
   journal={Compositio Math.},
   volume={115},
   date={1999},
   number={1},
   pages={37--69},
   issn={0010-437X},
}

\bib{Kohls}{article}{
   author={Kohls, Roman},
   title={Conductors of superelliptic curves},
   date={2019},
   note={Ph.D. thesis, Universit\"{a}t Ulm},
}

\bib{Li:cd}{article}{
    AUTHOR = {Liu, Qing},
     TITLE = {Conducteur et discriminant minimal de courbes de genre {$2$}},
   JOURNAL = {Compositio Math.},
  FJOURNAL = {Compositio Mathematica},
    VOLUME = {94},
      YEAR = {1994},
    NUMBER = {1},
     PAGES = {51--79},
      ISSN = {0010-437X},
   MRCLASS = {14H45 (11G20 14H25)},
}

\bib{LiuBook}{book}{
    AUTHOR = {Liu, Qing},
     TITLE = {Algebraic geometry and arithmetic curves},
    SERIES = {Oxford Graduate Texts in Mathematics},
    VOLUME = {6},
      NOTE = {Translated from the French by Reinie Ern\'{e},
              Oxford Science Publications},
 PUBLISHER = {Oxford University Press, Oxford},
      YEAR = {2002},
     PAGES = {xvi+576},
      ISBN = {0-19-850284-2},
   MRCLASS = {14-01 (11G30 14A05 14A15 14Gxx 14Hxx)},
  MRNUMBER = {1917232},
MRREVIEWER = {C\'{\i}cero Carvalho},
}
\bib{Mau}{article}{
   author={Maugeais, Sylvain},
   title={Rel\`evement des rev\^{e}tements $p$-cycliques des courbes rationnelles
   semi-stables},
   language={French, with French summary},
   journal={Math. Ann.},
   volume={327},
   date={2003},
   number={2},
   pages={365--393},
   issn={0025-5831},
}

\bib{OS1}{article}{
   author={Obus, Andrew},
   author = {Srinivasan, Padmavathi},
   title={Conductor-discriminant inequality for hyperelliptic curves
     in odd residue characteristic},
   date={2021},
  eprint={arxiv:1910.02589v3},
}

\bib{ObusWewers}{article}{
   author={Obus, Andrew},
   author = {Wewers, Stefan},
   title={Explicit resolution of weak wild arithmetic surface
     singularities},
   journal={J.\ Algebraic Geom.}
   date={2020},
   volume={29},
   number={1},
   pages={691--728},
}

\bib{OS2}{article}{
   author={Obus, Andrew},
   author={Srinivasan, Padmavathi},
   title={Explicit minimal embedded resolutions of divisors on models of the
   projective line},
   journal={Res. Number Theory},
   volume={8},
   date={2022},
   number={2},
   pages={Paper No. 27, 27},
   issn={2522-0160},
   review={\MR{4409862}},
   doi={10.1007/s40993-022-00323-y},
}
\bib{saito2}{article}{
   author={Saito, Takeshi},
   title={Conductor, discriminant, and the Noether formula of arithmetic
   surfaces},
   journal={Duke Math. J.},
   volume={57},
   date={1988},
   number={1},
   pages={151--173},
   issn={0012-7094},
}

\bib{SerreLF}{book}{
   author={Serre, Jean-Pierre},
   title={Local fields},
   series={Graduate Texts in Mathematics},
   volume={67},
   note={Translated from the French by Marvin Jay Greenberg},
   publisher={Springer-Verlag, New York-Berlin},
   date={1979},
   pages={viii+241},
   isbn={0-387-90424-7},
}

\bib{PadmaRational}{article}{
  author = 	 {Srinivasan, Padmavathi},
  title = 	 {Conductors and minimal discriminants of
    hyperelliptic curves with rational Weierstrass points},
  eprint = {arxiv:1508.05172v1}, 
  year =         {2015}, 
}

\bib{PadmaTame}{article}{
  author = 	 {Srinivasan, Padmavathi},
  title = 	 {Conductors and minimal discriminants of
    hyperelliptic curves: a comparison in the tame case},
  eprint = {arxiv:1910.08228v1}, 
  year =         {2019}, 
}
	
\end{biblist}
\end{bibdiv}

\end{document}